\documentclass[10pt,leqno]{article}
\usepackage{graphicx}
\baselineskip=16pt
\usepackage{physics}
\usepackage{fancybox}

\usepackage{xcolor}
\usepackage{titlesec}
\usepackage{fancyheadings}
\usepackage{graphicx}
\usepackage{subcaption}
\usepackage{lmodern}
\usepackage{amsmath}
\usepackage{ytableau}
\usepackage{amssymb}
\usepackage{mathrsfs}
\usepackage{subcaption}
\usepackage{graphicx}
\usepackage{hyperref}
\usepackage{youngtab}

\usepackage{xcolor}
\usepackage{amsthm,epsfig,epstopdf,url,array, geometry}
\usepackage{algorithm,algorithmic}
\usepackage[]{tikz}

\makeatletter
\RequirePackage{xkeyval}
\RequirePackage{tikz}
\RequirePackage{amssymb}
\usepackage{indentfirst,csquotes}
\usepackage{physics}
\topmargin= .5cm
\textheight= 20cm
\textwidth= 32cc
\baselineskip=16pt
\newcommand{\Exterior}{\mathchoice{{\textstyle\bigwedge}}%
	{{\bigwedge}}%
	{{\textstyle\wedge}}%
	{{\scriptstyle\wedge}}}
\evensidemargin= .9cm
\oddsidemargin= .9cm

\usepackage{amssymb,amsthm,amsmath}
\usepackage{xcolor,paralist,hyperref,titlesec,fancyhdr,etoolbox}
\newtheorem*{remarque}{Remark}
\newtheorem{theorem}{Theorem}[]
\newtheorem{definition}[theorem]{Definition}
\newtheorem{example}[theorem]{Example}

\newtheorem{proposition}[theorem]{Proposition}

\newtheorem{conjecture}[theorem]{Conjecture}

\titleformat{\section}[display]{\normalfont\huge\bfseries\centering}{\centering\chaptertitlename\thechapter}{10pt}{\Large}
\titlespacing*{\section}{0pt}{0ex}{0ex}

\hypersetup{ colorlinks=true, linkcolor=black, filecolor=black, urlcolor=black }

\usepackage{lipsum}

\begin{document}
	\title{KP equation : Sato tau functions and construction of Somos sequence} 
	\author{BENSAID Mohamed\footnote{Université de Lille, France\\
 \hspace*{0.5cm}E-mail adresse : mohamed.bensaid.etu@univ-lille.fr}}

	\maketitle
	
	\let\thefootnote\relax 

	\begin{abstract}
	In this short article, we will reconstruct the KP equation from Plücker relations and provide some generalizations on this topic. Additionally, in the final section, we define the discrete function $\tau$ in a similar manner, leading to the construction of an integer sequence that has not yet been listed in the OEIS. Furthermore, this approach allows us to construct many other sequences that are not listed in the OEIS, This allows us to see a connection between integrable systems and number theory.	\end{abstract}

\section*{Introduction}
The KP equation given by $\partial_x(u_{xxx}+6uu_{x}-4u_{t})+3u_{yy}=0$, is widely studied in physics to describe nonlinear wave motion. It is known that the construction of the KP hierarchy reduces to an equation with an infinite number of unknown functions, but this can be simplified to a single unknown function called the $\tau$ function. This function is unique up to multiplication by a certain constant. You can refer to the standard construction of the KP equation in \cite{1}, \cite{3}, \cite{6}, \cite{9}, \cite{10}, and \cite{13}.
\section*{The standard construction of the KP equation  }
We define the operator (Lax operator)  
\[
L=\partial+\sum_{i\geq 1}u_i\partial^{-i}
\]
where \( u_i = u_i(\mathbf{t}) \). The KP hierarchy equations are all given by:  
\begin{equation}\label{L}
	\partial_{t_{n}}L=[A_n,L], \quad n=1,2,\dots \quad \text{with } A_n=(L^n)_+
\end{equation}  
along with the compatibility condition  
\[
\partial_{m,n}^2L=\partial_{n,m}^2L.
\]

The system \eqref{L} can be rewritten as:  
\begin{equation}\label{l}
	[L,\partial_n-A_n]=0.
\end{equation}

\begin{remarque}  
	In the definition of the operator \( L \), we observe that the coefficient in front of \( \partial^0 \) does not appear. In fact, any element of the form  
	\[
	L = \partial+\sum_{i\geq 0}u_i\partial^{-i}
	\]
	can be rewritten without the term \( u_0 \). Indeed, by setting \( u_0=-\frac{g'}{g} \), the transformed operator \( g^{-1}Lg \) has no \( u_0 \) term.  
	Thus, the class of \( u_0 \) is not relevant to this study.
\end{remarque}  

\begin{proposition}  
	The KP hierarchy equations \eqref{L} are equivalent to:  
	\begin{equation}\label{ZZ}
		\partial_mA_n-\partial_nA_m+[A_n,A_m]=0
	\end{equation}
	and  
	\begin{equation}\label{ee}
		\partial_mA_n^--\partial_nA_m^-+[A_n^-,A_m^-]=0.
	\end{equation}
	As a result, the compatibility condition is satisfied.
\end{proposition}

\begin{example}[Standard construction for the KP equation]  
	Let us take \( n=2, m=3 \). We have  
	\[
	A_2=\partial^2+2u_1, \quad A_3=\partial^3 +3u_1\partial +3\partial u_1+3u_2.
	\]
	Setting \( t_1=x, t_2=y, t_3=t \), expanding  
	\[
	\partial_{t_3}A_2-\partial_{t_2}A_3+[A_2,A_3]=0
	\]
	yields two equations for the unknowns \( u_1, u_2 \). By substituting one into the other, we eliminate \( u_2 \), thus obtaining the KP equation.
\end{example}
\begin{remarque}  
    The condition \eqref{ZZ} is the compatibility condition of the linear system  
    \[
    \frac{\partial \psi}{\partial t_n}=A_n\psi, \quad \text{and} \quad L\psi=z\psi.
    \]
    In fact, if \( L \) is a solution of our KP hierarchy, then the system admits a solution, and there exists a distinguished solution of the system, which we will discuss shortly, called the **Baker function**.
\end{remarque}

The Baker function is given by:  
\[
\psi(\mathbf{t},z)=\mathcal{W}e^{\sum t_kz^k}
\]
where  
\[
\mathcal{W}=1+\sum_{k\geq1}\alpha_k\partial^{-k}.
\]
Setting  
\[
L=W\partial W^{-1}=\partial+\sum_{i\geq 1}u_i\partial^{-i}
\]
and defining  
\[
W^*=1+\sum_{k\geq1}\beta_k\partial^{-k}, \quad \text{with } \beta_k=(-1)^k\alpha_k^{*},
\]
we introduce the adjoint Baker function:  
\[
\psi^*(\mathbf{t},z)=(W^*)^{-1}e^{-\sum t_kz^k}.
\]

\begin{theorem}[Sato \cite{10}]  
    There exists a function \( \tau \) (unique up to multiplication by a certain constant) such that:  
    \begin{equation}
        \psi(\mathbf{t},z)=\frac{\tau(\mathbf{t}-[z^{-1}])}{\tau(\mathbf{t})}e^{\sum_{k\geq1}t_kz^k}
    \end{equation}  
    where  
    \[
    [z]=\left(z, \frac{z^2}{2}, \frac{z^3}{3}, \dots \right).
    \]
\end{theorem}

Now, we reconstruct the KP equation using another method, which leads us to define the discrete tau function, resulting in the construction of Somos sequences.
\section*{Plücker Relation}

Let $V$ be an infinite-dimensional vector space. Consider two subspaces of $V$, $L = \text{span}\{v_{i_1}, \dots\}$ and $L' = \text{span}\{w_{j_1}, \dots\}$ such that $L \cap L' = 0$ and $\operatorname{dim}(V/L \oplus L') = 2$.

We are given two vectors $u_1, u_2 \in V$. 

We define an element $\dots w_{j_2} \wedge w_{j_1} \wedge u_1 \wedge u_2 \wedge v_{i_1} \wedge v_{i_2} \dots \in \Exterior^{\text{max}} V$. 

Once again, this can be written as
$$\bra{L'}\wedge u_1\wedge u_2 \wedge\ket{L},$$
where $\bra{L'} := \dots w_{j_2} \wedge w_{j_1}$ and $\ket{L} := v_{i_1}\wedge v_{i_2} \dots$

To shorten notation we will omit below the sign of the wedge produc

\begin{proposition}[Plücker Relation]
	For all $a, b, c, d \in V$, we have
	$$\langle L'|ab|L\rangle\langle L'|cd|L\rangle-\langle L'|ac|L\rangle\langle L'|bd|L\rangle+\langle L'|ad|L\rangle\langle L'|bc|L\rangle=0 \in (\Exterior^{\text{max}})^{\otimes 2} V.$$
\end{proposition}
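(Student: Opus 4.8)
The plan is to reduce the identity, which lives in $(\Exterior^{\text{max}})^{\otimes 2}V$, to the classical three–term Plücker relation among $2\times 2$ minors. The first step is to record that the assignment $(a,b)\mapsto\langle L'|ab|L\rangle$ is bilinear and alternating: bilinearity is immediate from multilinearity of the wedge product, and $\langle L'|aa|L\rangle=0$ since $a\wedge a=0$. Hence it factors through $\Exterior^2 V$. Next, if $\ell\in L$ then $\langle L'|\ell b|L\rangle=0$, because $\ell$ is a finite linear combination of the $v_{i_k}$, each of which already occurs in $\ket{L}=v_{i_1}\wedge v_{i_2}\wedge\cdots$, so wedging it in again annihilates the maximal wedge; symmetrically $\langle L'|\ell' b|L\rangle=0$ for $\ell'\in L'$. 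Therefore the bilinear map descends to a linear map $\Exterior^2 W\to\Exterior^{\text{max}}V$, where $W:=V/(L\oplus L')$ is the $2$-dimensional quotient.

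The second step is to make this concrete. Fix a basis $e_1,e_2$ of a complement of $L\oplus L'$ in $V$, so that $\Exterior^2 W$ is the line spanned by $\bar e_1\wedge\bar e_2$ and, writing $\bar a=a_1\bar e_1+a_2\bar e_2$ for the class of $a\in V$, we have $\bar a\wedge\bar b=(a_1b_2-a_2b_1)\,\bar e_1\wedge\bar e_2$. Consequently
$$\langle L'|ab|L\rangle \;=\; \Delta(a,b)\,\Omega, \qquad \Delta(a,b):=\det\!\begin{pmatrix}a_1&b_1\\ a_2&b_2\end{pmatrix},\quad \Omega:=\langle L'|e_1e_2|L\rangle.$$
Substituting this into the three products in the statement, every term acquires the common factor $\Omega\otimes\Omega\in(\Exterior^{\text{max}})^{\otimes 2}V$, and the proposition becomes equivalent to the scalar identity
$$\Delta(a,b)\Delta(c,d)-\Delta(a,c)\Delta(b,d)+\Delta(a,d)\Delta(b,c)=0.$$

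The final step is to verify this last relation, which is the defining Plücker relation of $\mathrm{Gr}(2,4)$ and holds for the $2\times 2$ minors of any $2\times 4$ matrix (here the matrix whose columns are the coordinate vectors of $a,b,c,d$). One way is the brute-force check: each product of two $2\times 2$ determinants expands into four monomials in the eight coordinates $a_1,\dots,d_2$, giving twelve monomials that cancel in pairs; conceptually this reflects $\Exterior^3 W=0$ since $\dim W=2$. Everything here is elementary once the reduction is in place, so I expect the only delicate point to be the first step: being precise about what the maximal wedge $\Exterior^{\text{max}}V$ is and why wedging in a vector of $L$ or $L'$ kills an element of it — in particular, checking that the codimension hypothesis $\dim(V/(L\oplus L'))=2$ is exactly what makes $\langle L'|ab|L\rangle$ a well-defined element of $\Exterior^{\text{max}}V$ to begin with.
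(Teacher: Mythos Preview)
Your argument is correct and follows the same reduction as the paper: pass to the two-dimensional quotient $W=V/(L\oplus L')$ and conclude by the vanishing of a top exterior power. The paper's write-up is slightly more compressed---it observes that the whole expression $R(a,b,c,d)$ is multilinear, vanishes whenever any argument lies in $L\oplus L'$, and is alternating in all four variables, hence factors through $\Exterior^4\mathbb{C}^2=0$---but this is exactly your idea without introducing the explicit $2\times 2$ minors.
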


\begin{proof}
	Let $R(a, b, c, d)$ denote the right-hand side. We note that $R$ is a multilinear function that vanishes if any of the vectors belong to $L \oplus L'$. Hence, we can consider it as a function of $\mathbb{C}^2 = V/L \oplus L'$, and since $R$ is antisymmetric, the function $R$ is identically zero because $\Exterior^4 \mathbb{C}^2 = 0$.
\end{proof}
\begin{remarque}
	We can even generalize this relation. Indeed, let $L, L'$ be such that $L \cap L' = 0$ and $\operatorname{dim}(V/L \oplus L') = 3$. We can similarly show that for any $a, b, c, x, y, z \in V$, we have:
	\begin{align*}
		\bra{L'} abc \ket{L} \bra{L'} xyz \ket{L} &- \bra{L'} abx \ket{L} \bra{L'} ayz \ket{L} + \\
		&+ \bra{L'} aby \ket{L} \bra{L'} cxz \ket{L} - \bra{L'} abz \ket{L} \bra{L'} axy \ket{L} = 0.
	\end{align*}
\end{remarque}

\section*{Description of the KP Equation}

In this section, we will construct the KP equation again from the $\tau$ function.
\subsection*{Bosons-fermions coresspondance}
We recall many results concerning the correspondence between bosons and fermions.
\begin{definition}[Maya Diagram]
	A Maya diagram is an arrangement of "crosses" and "circles" at each position in $\mathbb{Z} + \frac{1}{2}$ such that all positions $m < 0$ except for a finite number have a cross, and all positions $m > 0$ except for a finite number have a circle. In other words, $M$ is a Maya diagram if $|\{M \cap (\mathbb{Z}_{>0} + \frac{1}{2})\}| < +\infty$ and $|\{(\mathbb{Z}_{<0} + \frac{1}{2}) \cap M^c\}| < +\infty$.
\end{definition}

\begin{proposition}
	$$\mathcal{M}aya \simeq \mathcal{Y}oung \times \mathbb{Z}$$
\end{proposition}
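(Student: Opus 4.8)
The plan is to exhibit an explicit bijection $\Phi$ sending a Maya diagram $M$ to a pair $(\lambda,c)$, where $\lambda$ is a Young diagram and $c\in\mathbb{Z}$ is the \emph{charge}. First I would define the charge of $M$ by
$c(M) := |M\cap(\mathbb{Z}_{>0}+\tfrac12)| - |(\mathbb{Z}_{<0}+\tfrac12)\cap M^{c}|$, which is a finite integer precisely because of the two finiteness conditions in the definition of a Maya diagram; it measures the excess of crosses on the positive half-line over circles (``holes'') on the negative half-line. Introduce the \emph{vacuum of charge} $c$, the Maya diagram $\varnothing_c$ with a cross at every position $m<c$ and a circle at every position $m>c$, and check that $c(\varnothing_c)=c$; this pins down all offset conventions.

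Next, fixing $M$, I would list the positions of its crosses in strictly decreasing order $p_1>p_2>p_3>\cdots$ (there are infinitely many, all but finitely many negative). The finiteness hypotheses guarantee that $p_k = c(M)-k+\tfrac12$ for all $k$ large enough, i.e.\ the crosses of $M$ eventually coincide with those of $\varnothing_{c(M)}$. Set $\lambda_k := p_k-\bigl(c(M)-k+\tfrac12\bigr)$ for $k\ge 1$. One checks that $\lambda_1\ge\lambda_2\ge\cdots\ge 0$ (weak decrease is immediate from $p_k\ge p_{k+1}+1$) and that $\lambda_k=0$ for $k$ large, so $\lambda:=(\lambda_1,\lambda_2,\dots)$ is a genuine Young diagram. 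This defines $\Phi(M):=(\lambda,c(M))$.

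I would then construct the inverse: given $(\lambda,c)$, place a cross exactly at each position of the set $\{\lambda_k+c-k+\tfrac12 : k\ge 1\}$ (these are distinct and strictly decreasing in $k$ since $\lambda$ is weakly decreasing) and a circle everywhere else; the finiteness of the support of $\lambda$ ensures the result is a Maya diagram of charge $c$. Checking that this assignment and $\Phi$ are mutually inverse is a direct unwinding of the definitions. Alternatively — and more transparently — I would identify $\lambda$ with the boundary profile of $M$: reading $\mathbb{Z}+\tfrac12$ from $-\infty$ to $+\infty$ and recording a unit ``right'' step at each cross and a unit ``up'' step at each circle, after the charge-$c$ recentering one obtains exactly the staircase boundary of the Young diagram $\lambda$, which makes the correspondence visually evident, with $\varnothing_c\mapsto(\varnothing,c)$.

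The only real obstacle is bookkeeping: carrying the half-integer shifts consistently so that (i) $c(M)$ is well defined and normalized by the vacua, and (ii) the stabilization $p_k=c(M)-k+\tfrac12$ holds, which is exactly where \emph{both} finiteness conditions in the definition of a Maya diagram are used. Once the normalization is fixed there is no further difficulty, and the map is manifestly a bijection of sets.
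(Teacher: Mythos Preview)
Your argument is correct and is exactly the standard bijection the paper has in mind; the paper's own proof merely displays a figure for the example $\lambda=(4,2,2,1)$ and asserts that each pair $(\lambda,l)$ determines a unique Maya diagram and conversely, which is precisely your ``boundary profile'' description. You have simply made explicit the charge and the formula $\lambda_k=p_k-(c-k+\tfrac12)$ that the paper leaves implicit in the picture.
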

\begin{proof}
To prove this proposition, we present the key idea. Let us take, for example $\lambda=(4,2,2,1)$,
\begin{figure}[h]
    \centering
    \includegraphics[width=10cm, height=5cm]{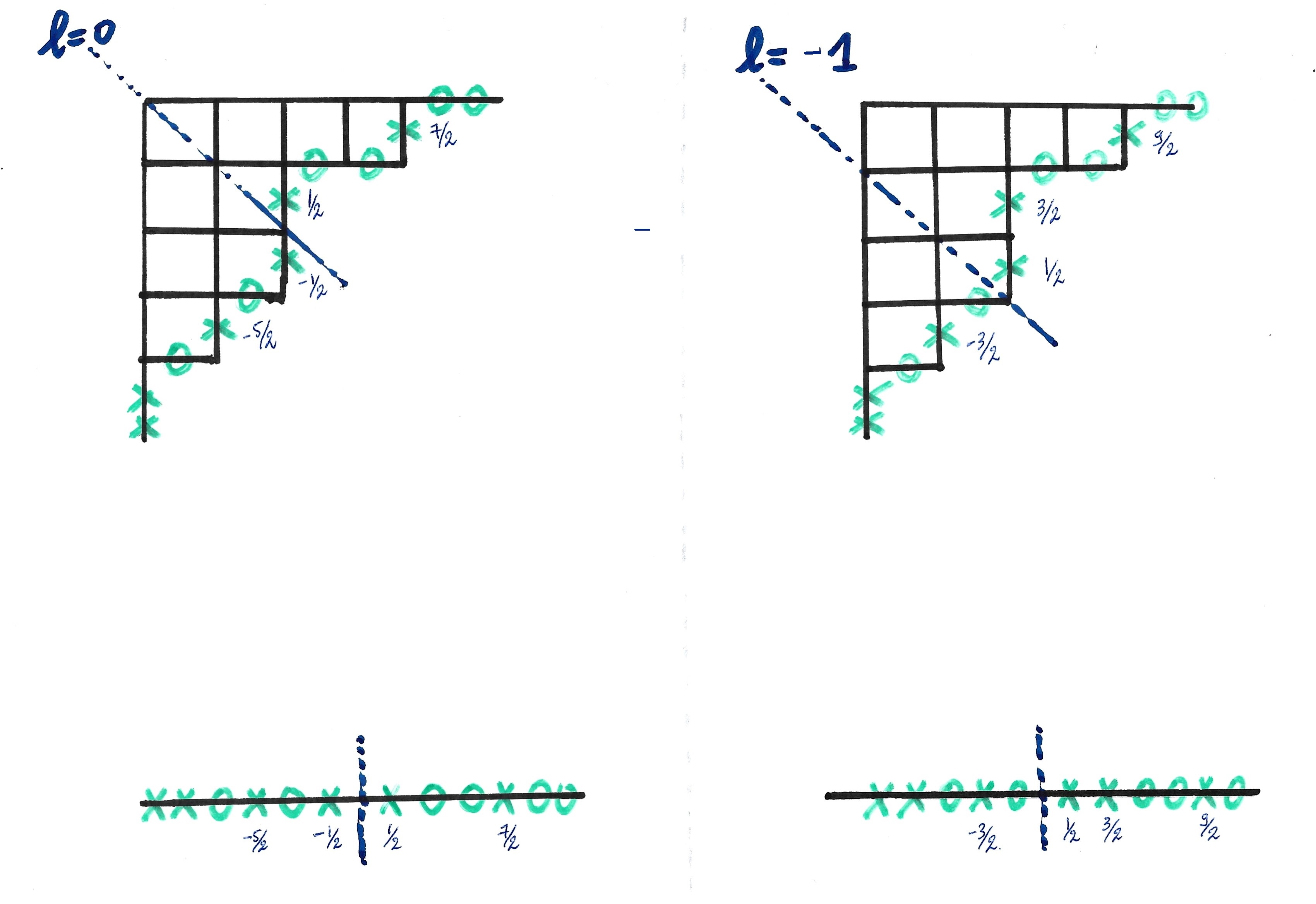}
    \caption{Correspondence between Maya and Young}
\end{figure}

 In this figure, we observe that each pair $(\lambda,l)$ defines the unique Maya diagram, and vice versa.
\end{proof}
The bosonic space $\mathcal{B} = \mathbb{C}[p_1, \ldots; z, z^{-1}] = \bigoplus_{l \in \mathbb{Z}} z^l \mathbb{C}[p_1, \ldots]$ is introduced, along with two operators $p_m = p_m \cdot$ and $p_{-m} = m \frac{\partial}{\partial p_m}$ which act on the space $\mathcal{B}$.

The fermionic space $\mathcal{F} := \bigoplus_{l \in \mathbb{Z}} \mathcal{F}^l := \bigoplus_{l\in \mathbb{Z}}\{v_{i_1} v_{i_2} \cdots \mid i_1 > i_2 > \cdots, i_k = -k + l + \frac{1}{2} \text{ for } k \gg 0\}$, in other words, the basis is indexed by Maya diagrams. Is introduced, with two operators $\{\psi_n, \psi_n^*\}_{n \in \mathbb{Z} + \frac{1}{2}}$ which act on the fermionic space as follows:

$$\psi_n \cdot v_\lambda = v_n v_\lambda$$

$$\psi_n^* \cdot v_\lambda = \begin{cases}
	0 & \text{if } v_n \text{ does not appear as a factor in } v_\lambda, \\
	v_\mu & \text{if } v_\lambda \text{ can be expressed as } v_n v_\mu \text{ for some } v_\mu.
\end{cases}$$

\begin{theorem}[Boson-Fermion Correspondence]
	The bosonic space is isomorphic to the fermionic space, and we have the following correspondences for \(m > 0\):
	\begin{center}
		\begin{tabular}{ccc}
			\(\sum_i \psi_{i+m}\psi_i^*\) & \(\longleftrightarrow\) & \(p_m\) \\ 
			\(\sum_i \psi_{i-m}\psi_i^*\) & \(\longleftrightarrow\) & \(p_{-m} = m\frac{\partial}{\partial p_m}\) \\
			\(\sum_i :\psi_{i}\psi_i^* := \sum_{i<0}\psi_i\psi_i^* - \sum_{i>0}\psi_i^*\psi_i\) & \(\longleftrightarrow\) & \(l\)
		\end{tabular}
	\end{center}
\end{theorem}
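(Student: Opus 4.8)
The plan is to factor the isomorphism through the Heisenberg (oscillator) algebra. On the fermionic side, put for $m\in\mathbb{Z}\setminus\{0\}$
$$\alpha_m := \sum_{i\in\mathbb{Z}+\frac12}\psi_{i-m}\psi_i^*,\qquad \alpha_0 := \sum_{i<0}\psi_i\psi_i^* - \sum_{i>0}\psi_i^*\psi_i,$$
so that the three rows of the table read $\alpha_{-m}\leftrightarrow p_m$, $\alpha_m\leftrightarrow p_{-m}=m\,\partial/\partial p_m$, and $\alpha_0\leftrightarrow l$. The first step is a Wick-type computation (the only delicate point being that the defining sums are infinite and become well defined only after normal ordering, the reordering being exactly what produces the central term): one checks $[\alpha_m,\alpha_n]=m\,\delta_{m+n,0}$, together with $[\alpha_0,\psi_n]=\psi_n$ and $[\alpha_0,\psi_n^*]=-\psi_n^*$. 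In particular each charge subspace $\mathcal{F}^l$ (the $\alpha_0$-eigenspace of eigenvalue $l$, whose basis is indexed by Maya diagrams of charge $l$) is stable under all $\alpha_m$, and $\mathcal{F}=\bigoplus_l\mathcal{F}^l$. The theorem then reduces to identifying each $\mathcal{F}^l$, as a module over this Heisenberg algebra, with the Fock module $\mathbb{C}[p_1,p_2,\dots]$.

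Second, I would single out in $\mathcal{F}^l$ the vacuum $|l\rangle:=v_{l-\frac12}v_{l-\frac32}\cdots$ (the Maya diagram with every position $<l$ filled, every position $\ge l$ empty), verify $\alpha_m|l\rangle=0$ for $m>0$ and $\alpha_0|l\rangle=l\,|l\rangle$, and show that the vectors $\alpha_{-\lambda_1}\alpha_{-\lambda_2}\cdots|l\rangle$, $\lambda$ a partition, span $\mathcal{F}^l$. For the spanning, grade $\mathcal{F}^l$ by the energy $|\mu|$ of the partition $\mu$ attached to a basis vector $v_\mu$ via the bijection $\mathcal{M}aya\simeq\mathcal{Y}oung\times\mathbb{Z}$ of the earlier proposition; since $\alpha_{-m}$ raises energy by $m$ and its action on a Maya diagram deletes a fermion at some $i$ and inserts one at $i+m$ — i.e. adds a size-$m$ border strip with a sign, which is precisely the Murnaghan–Nakayama rule — one writes any $v_\mu$ modulo lower-energy terms as a combination of the $\alpha_{-\lambda}|l\rangle$, and induction on the energy finishes the argument.

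Third, I would upgrade "spanning" to "basis" by a graded-dimension count: in energy degree $n$ the space $\mathcal{F}^l$ has dimension equal to the number of partitions of $n$ (immediate from the Maya–Young bijection), with generating function $\prod_{k\ge1}(1-q^k)^{-1}$, which is exactly the graded dimension of $\mathbb{C}[p_1,p_2,\dots]$ under $\deg p_k=k$. Hence the surjection $\mathbb{C}[p_1,p_2,\dots]\to\mathcal{F}^l$ sending $p_{\lambda_1}p_{\lambda_2}\cdots\mapsto\alpha_{-\lambda_1}\alpha_{-\lambda_2}\cdots|l\rangle$ is a degreewise isomorphism; summing over $l$ and tensoring with $\mathbb{C}[z,z^{-1}]$ gives $\mathcal{B}\simeq\mathcal{F}$. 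By construction $\alpha_{-m}$ becomes multiplication by $p_m$, and from $[\alpha_m,\alpha_{-m}]=m$ together with $\alpha_m|l\rangle=0$ one gets that $\alpha_m$ ($m>0$) acts as the derivation $m\,\partial/\partial p_m$; the identities $\sum_i\psi_{i+m}\psi_i^*=\alpha_{-m}$ and $\sum_i:\psi_i\psi_i^{*}:=\alpha_0$ are then the stated correspondences. One can also record the closed form of the isomorphism, $v_\mu\mapsto z^{l}s_\mu(p_1,\tfrac{p_2}{2},\dots)$ with $s_\mu$ the Schur polynomial, and re-derive all three operator actions from Murnaghan–Nakayama, which makes the whole dictionary explicit.

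I expect the main obstacle to lie in the two sign-laden computations rather than in the structural argument: (i) establishing $[\alpha_m,\alpha_n]=m\,\delta_{m+n,0}$ rigorously, where the central term is an artifact of normal ordering and must be extracted carefully from a formally divergent sum; and (ii) matching the signs in the fermionic action of $\alpha_{\pm m}$ with the height signs in the border-strip (Murnaghan–Nakayama) rule — this is the only place where the identification with Schur functions does real work beyond the bare dimension count.
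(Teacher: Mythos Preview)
The paper does not actually prove this theorem: the enclosing subsection begins ``We recall many results concerning the correspondence between bosons and fermions,'' and the statement is presented without a proof environment, as a standard fact imported from the cited literature. So there is no argument in the paper to compare yours against.

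That said, your outline is the standard correct proof via the Heisenberg algebra and a graded-dimension count. Two small remarks. First, the spanning step is slightly mis-phrased: each $\alpha_{-\lambda}|l\rangle$ is homogeneous of energy $|\lambda|$, so there are no ``lower-energy terms'' to induct over; what you actually need is that, in each fixed energy level $n$, the transition matrix from $\{\alpha_{-\lambda}|l\rangle : |\lambda|=n\}$ to $\{v_\mu : |\mu|=n\}$ is (up to normalization) the character table of $S_n$ --- equivalently the power-sum-to-Schur transition matrix --- and hence invertible. Second, the Murnaghan--Nakayama combinatorics can be bypassed entirely: the bosonic Fock module $\mathbb{C}[p_1,p_2,\dots]$ is irreducible over the Heisenberg algebra, so the nonzero intertwiner $p_\lambda\mapsto\alpha_{-\lambda}|l\rangle$ is automatically injective, and the graded-dimension equality you already established then forces surjectivity. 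Either route works; the second is shorter if you only want the isomorphism, while yours has the advantage of making the explicit Schur-function formula $v_\mu\mapsto z^l s_\mu$ drop out.
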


\begin{remarque}
	We can also consider the mapping:
	\[
	(n, \lambda) \mapsto \prod_i p_{\lambda_i} |n\rangle
	\]
	where \(\ket{n}\) is defined as
	\[
	\ket{n} := \cdots \times \times \times \times \times \times \bigg|_{n} \circ \circ \circ \cdots
	\]
	
\end{remarque}
\subsection*{KP equation from Plucker} We recall that the $\tau$ function in fermionic image can be written as:
\begin{equation}\label{h}
	\tau(t_1, t_2, \dots) = \bra{0} g \cdot \exp\left(\sum_{k \geq 1} t_k p_k\right) \ket{0} = \bra{g} \exp\left(\sum_{k \geq 1} t_k p_k\right) \ket{0},
\end{equation}
with $p_k = \sum_j \psi_{j+k} \psi^*_{j}$.

\begin{remarque}
	the $g$ is a subspace of the space $V$.
\end{remarque}
Now, let's construct the KP equation from the Plücker relation.

Let $x = t_1$, $y = t_2$, $t = t_3$, and denote:
\begin{align*}
	&\langle g|\exp(xp_1+yp_2+tp_3) = \langle L'| \\
	&v_{-5/2}v_{-7/2}\cdots= |L\rangle\\
	&(a, b, c, d) = (v_{3/2}, v_{1/2}, v_{-1/2}, v_{-3/2})
\end{align*}

\begin{align}
	&\times\times\circ\circ = |0\rangle = v_{-1/2}v_{-3/2}|L\rangle \label{1} \\
	&\times\circ\times\circ = p_1 |0\rangle = v_{1/2}v_{-3/2}|L\rangle \\
	&\circ\times\times\circ = \frac{1}{2}(p_1^2 + p_2)|0\rangle = v_{1/2}v_{-1/2}|L\rangle \\
	&\times\circ\circ\times = \frac{1}{2}(p_1^2 - p_2)|0\rangle = v_{3/2}v_{-3/2}|L\rangle \\
	&\circ\times\circ\times = \frac{1}{3}(p_1^3 - p_3)|0\rangle = v_{3/2}v_{-1/2}|L\rangle \\
	&\circ\circ\times\times = \frac{1}{12}(p_1^4 + 3p_2^2 - 4p_1p_3)|0\rangle = v_{3/2}v_{1/2}|L\rangle \label{6}.
\end{align}

Plücker Relation:
\begin{align*}
	&\langle L'|v_{3/2}v_{1/2}|L\rangle \langle L'|v_{-1/2}v_{-3/2}|L\rangle - \langle L'|v_{3/2}v_{-1/2}|L\rangle \langle L'|v_{1/2}v_{-3/2}|L\rangle + \\
	&+\langle L'|v_{3/2}v_{-3/2}|L\rangle \langle L'|v_{1/2}v_{-1/2}|L\rangle = 0.
\end{align*}

This can be symbolically written as:
$$\circ\circ\times\times \otimes \times\times\circ\circ - \circ\times\circ\times \otimes \times\circ\times\circ + \circ\times\times\circ \otimes \times\circ\circ\times = 0.$$

Substituting \ref{1} -- \ref{6} into the Plücker relation:
\begin{align*}
	&\langle L'| \frac{1}{12}(p_1^4 + 3p_2^2 - 4p_1p_3)|0\rangle \langle L'|L\rangle - \langle L'| \frac{1}{3}(p_1^3 - p_3)|0\rangle \langle L'|p_1|0\rangle |L\rangle + \\
	&+\langle L'| \frac{1}{2}(p_1^2 + p_2)|L\rangle \langle L'| \frac{1}{2}(p_1^2 - p_2)|L\rangle = 0.
\end{align*}

And using that
$$\frac{\partial^{k+l+m} \tau}{\partial x^k \partial y^l \partial z^m} = \langle g | \exp(xp_1 + yp_2 + zp_3) p_1^k p_2^l p_3^m |0\rangle,$$
we obtain:

\begin{align*}
	&(\tau_{xxxx}+3\tau_{yy}-4\tau_{xt})\tau-4(\tau_{xxx}-\tau_t)\tau_x+3(\tau_{xx}+\tau_y)(\tau_{xx}-\tau_y)=0\\
	\Leftrightarrow\quad&(\tau\tau_{xxxx}-4\tau_{xxx}\tau_x+3\tau_{xx}^2)-4(\tau\tau_{xt}-\tau_x\tau_t)+3(\tau\tau_{yy}-\tau_y^2)=0\\
	\textrm{Soit }\tau =e^w\\
	\Leftrightarrow\quad&w_{xxxx}+6w_{xx}^2-4w_{xt}+3w_{yy}=0\\
	\Rightarrow\quad&w_{xxxxx}+6w_{xx}w_{xxx}-4w_{xxt}+3w_{yyx}=0\\
	\Rightarrow\quad&\partial_x(w_{xxxxx}+6w_{xx}w_{xxx}-4w_{xxt})+3w_{yyxx}=0\\
	\textrm{Soit } 	u=w_{xx}\\
	\Leftrightarrow\quad&\partial_x(u_{xxx}+6uu_{x}-4u_{t})+3u_{yy}=0
\end{align*}
\section*{Toda lattice}
Let's begin by defining the function $\tau$ similarly:
\begin{equation}
	\tau(n,x,y) = \bra{g} \exp(xp_{1,1} + yp_{2,1}) \ket{n, -n}
\end{equation}
where $p_{1,1} = \sum \psi_{1,i+1} \psi^*_{1,i}$ and $p_{2,1} = \sum_i \psi_{2,i+1} \psi^*_{2,i}$.

\begin{align*}
	\begin{array}{c}\circ\ \circ\\[-3pt] \times\ \times\end{array} & = \left|\begin{array}{c}n-1\\[-3pt] -n+1\end{array}\right\rangle,\\
	\begin{array}{c}\times\ \times\\[-3pt] \circ\ \circ\end{array} & = \left|\begin{array}{c}n+1\\[-3pt] -n-1\end{array}\right\rangle,\\
	\begin{array}{c}\circ\,\times\\[-3pt]\,\times\,\circ\end{array} & = p_{1,1} \left|\begin{array}{c}n\\[-3pt] -n\end{array}\right\rangle,\\
	\begin{array}{c}\times\,\circ\\[-3pt]\,\circ\,\times\end{array} & = p_{2,1} \left|\begin{array}{c}n\\[-3pt] -n\end{array}\right\rangle,\\
	\begin{array}{c}\circ\,\times\\[-3pt]\,\circ\,\times\end{array} & = p_{2,1} p_{1,1} \left|\begin{array}{c}n\\[-3pt] -n\end{array}\right\rangle,\\
	\begin{array}{c}\times\,\circ\\[-3pt]\,\times\,\circ\end{array} & = \left|\begin{array}{c}n\\[-3pt] -n\end{array}\right\rangle.
\end{align*}

Again, using the same idea from the previous section, the Plücker relation is:
$$\begin{array}{c}\times\ \times\\[-3pt] \circ\ \circ\end{array} \otimes \begin{array}{c}\circ\ \circ\\[-3pt] \times\ \times\end{array} - \begin{array}{c}\circ\,\times\\[-3pt]\,\times\,\circ\end{array} \otimes \begin{array}{c}\times\,\circ\\[-3pt]\,\circ\,\times\end{array} + \begin{array}{c}\times\,\circ\\[-3pt]\,\circ\,\times\end{array} \otimes \begin{array}{c}\circ\,\times\\[-3pt]\,\times\,\circ\end{array} = 0.$$

\begin{align*}
	&\tau(n+1)\tau(n-1) - \tau_x(n) \tau_y(n) + \tau(n) \tau_{xy}(n) = 0,\\
	\Rightarrow\ & \frac{\tau(n+1) \tau(n-1)}{\tau(n)^2} = \frac{\tau_x(n) \tau_y(n)}{\tau(n)^2} - \frac{\tau_{xy}(n)}{\tau(n)} = -\frac{\partial^2}{\partial x \partial y} \log \tau(n),\\
	&\text{Let } v(n) = \ln \frac{\tau(n+1) \tau(n-1)}{\tau(n)^2},\\
	\Rightarrow\ & e^{v(n)} = -\frac{\partial^2}{\partial x \partial y} \ln \tau(n),\\
	\Rightarrow\ & 2e^{v(n)} - e^{v(n+1)} - e^{v(n-1)} = \frac{\partial^2}{\partial x \partial y} (\ln \tau(n+1) + \ln \tau(n-1) - 2 \ln \tau(n)) = v_{xy}(n),\\
	\Rightarrow\ & v_{xy} = 2e^{v(n)} - e^{v(n+1)} - e^{v(n-1)}.
\end{align*}

\textbf{Meaning of the Symbol} $\ket{n,-n}$

$\ket{2,-2} := \begin{array}{c}\times\ \times \times\ \times \times\ \times \times\ \underset{2}{|} \circ\ \circ\ \circ \\[-3pt]\,\times\ \times \times\underset{-2}{|} \circ\ \circ\ \circ\ \circ\ \circ\ \circ\ \circ\end{array} = v_{\frac{3}{2}}v_{\frac{1}{2}}v_{-\frac{1}{2}}v_{-\frac{3}{2}}v_{-\frac{5}{2}}w_{-\frac{5}{2}}v_{-\frac{7}{2}}w_{-\frac{7}{2}}\dots$

\begin{remarque}
	The Plücker relation remains valid; indeed, $V = V_1 \oplus V_2$ and we still choose $\operatorname{dim} V/L' \oplus L = 2$, remember that $\Exterior (L_1\oplus L_2)=\Exterior L_1\bigotimes\Exterior L_2$
\end{remarque}

\section*{The Discrete Function $\tau$}
The main idea behind the construction of the discrete function $\tau$ is similar to that of the generalized Toda, except that in this case, we only consider the integers $n_1, \ldots, n_s$. We define the discrete function $\tau$ by:

\begin{equation}\label{i}
	\tau(n_1, \ldots, n_s) = \bra{g} \ket{n_1, \ldots, n_s}
\end{equation}
with $\operatorname{deg}(\textbf{n})=\sum n_i = 0$.
\begin{theorem}
	The function $\tau$ satisfies the following octahedral relation for all $\textbf{n} \in \mathbb{Z}^s$ such that $\operatorname{deg}(\textbf{n}) = -2$, and for any $\alpha < \beta < \gamma < \delta$, we have
	\begin{equation}\label{s}
		\tau(\textbf{n} + e^{\alpha} + e^{\beta})\tau(\textbf{n} + e^{\gamma} + e^{\delta}) - \tau(\textbf{n} + e^{\alpha} + e^{\gamma})\tau(\textbf{n} + e^{\beta} + e^{\delta}) + \tau(\textbf{n} + e^{\alpha} + e^{\delta})\tau(\textbf{n} + e^{\beta} + e^{\gamma}) = 0
	\end{equation}
	where $e^{\alpha} = (0, \ldots, \underset{\text{in the } \alpha\text{-th position}}{1}, \ldots, 0)$.
\end{theorem}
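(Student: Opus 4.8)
\proof
The plan is to run, in this purely discrete setting, the mechanism already used for KP and for the Toda lattice: exhibit the six numbers in \eqref{s} as Plücker coordinates of $g$ attached to four marked fermionic modes sitting at the particle frontiers of the components $\alpha,\beta,\gamma,\delta$, apply the Plücker Relation (Proposition) to those four modes, and read the outcome off on $g$.

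First I would discard the inactive components. Writing $V=V_\alpha\oplus V_\beta\oplus V_\gamma\oplus V_\delta\oplus W$ with $W$ the sum of the remaining components, every one of the six kets $\ket{\textbf{n}+e^{i}+e^{j}}$ carries one and the same semi--infinite wedge in each component of $W$; by the tensor factorization $\Exterior(L_1\oplus L_2)=\Exterior L_1\otimes\Exterior L_2$ that part of the wedge factors out as a common tensor factor and may be absorbed into $\bra{g}$. Note that the hypothesis $\deg(\textbf{n})=-2$ is precisely what places each $\textbf{n}+e^{i}+e^{j}$ in the sector $\{\deg=0\}$ on which $\tau$ is defined in \eqref{i} --- this is why two boxes are added --- and that, $g$ being a subspace of $V$, it is decomposable, so the Plücker Relation applies to it with $\bra{g}$ in the role of $\bra{L'}$.

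Now the six states. For $i\in\{\alpha,\beta,\gamma,\delta\}$ put $\phi_i:=v_{i,\,n_i+1/2}$, the fermionic mode at the frontier of the base charge $n_i$ in component $i$, so that passing from $\textbf{n}$ to $\textbf{n}+e^{i}$ is wedging by $\phi_i$; and let $\ket{L}$ be the semi--infinite wedge of all modes occupied in $\ket{\textbf{n}}$ (the $v_{j,m}$ with $m<n_j$, in every component). Reordering $\phi_i\wedge\phi_j\wedge\ket{L}$ into the standard order used for $\ket{n_1,\dots,n_s}$ produces a sign $\varepsilon_{ij}\in\{\pm1\}$ with
$$\langle g|\phi_i\phi_j|L\rangle=\varepsilon_{ij}\,\tau(\textbf{n}+e^{i}+e^{j}),$$
the six choices of $\{i,j\}$ giving the six ways of occupying two of the four frontier slots --- the discrete (and simpler, no exponential factor) analogue of the Maya dictionary \eqref{1}--\eqref{6} and of the six $2\times2$ blocks of the Toda section. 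Applying the Plücker Relation with the four marked vectors $(a,b,c,d)=(\phi_\alpha,\phi_\beta,\phi_\gamma,\phi_\delta)$ and substituting the last display yields
\begin{align*}
&\varepsilon_{\alpha\beta}\varepsilon_{\gamma\delta}\,\tau(\textbf{n}+e^{\alpha}+e^{\beta})\,\tau(\textbf{n}+e^{\gamma}+e^{\delta})-\varepsilon_{\alpha\gamma}\varepsilon_{\beta\delta}\,\tau(\textbf{n}+e^{\alpha}+e^{\gamma})\,\tau(\textbf{n}+e^{\beta}+e^{\delta})\\
&\qquad\qquad+\varepsilon_{\alpha\delta}\varepsilon_{\beta\gamma}\,\tau(\textbf{n}+e^{\alpha}+e^{\delta})\,\tau(\textbf{n}+e^{\beta}+e^{\gamma})=0.
\end{align*}

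The crux is then to show that the three products $\varepsilon_{\alpha\beta}\varepsilon_{\gamma\delta}$, $\varepsilon_{\alpha\gamma}\varepsilon_{\beta\delta}$, $\varepsilon_{\alpha\delta}\varepsilon_{\beta\gamma}$ coincide, so that their common value cancels and \eqref{s} survives with exactly the sign pattern $+,-,+$ of the Plücker Relation. Each $\varepsilon_{ij}$ is a Koszul sign: it records how many modes occupied in $\ket{\textbf{n}}$ lie above the home positions of $\phi_i$ and of $\phi_j$ in the standard order --- a count controlled by the differences $n_k-n_i$ of the base charges --- together with the sign for $\phi_i$ and $\phi_j$ crossing one another and, in the multi--component setting, the cocycle (Klein--factor) sign carried by the identification $\Exterior(L_1\oplus L_2)=\Exterior L_1\otimes\Exterior L_2$ that underlies the definition of $\ket{n_1,\dots,n_s}$. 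I would fix that convention once and for all, absorb the single--insertion and base--tail contributions into renormalised modes $\tilde\phi_i$, and check that what is left in each of the three products is one and the same sign times the mutual crossing signs of $\phi_\alpha,\phi_\beta,\phi_\gamma,\phi_\delta$, which are precisely the $+,-,+$ of the Grassmann--Plücker relation on four elements. I expect this reconciliation of signs to be the only delicate point; everything else is the ``Plücker, then evaluate on $g$'' recipe carried out twice above.
\endproof
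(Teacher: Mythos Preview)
Your proposal is correct and follows exactly the paper's approach: set $(a,b,c,d)=(v_{\alpha,n_\alpha+1/2},v_{\beta,n_\beta+1/2},v_{\gamma,n_\gamma+1/2},v_{\delta,n_\delta+1/2})$, take $\ket{L}=\ket{\textbf{n}}$ and $\bra{L'}=\bra{g}$, and invoke the Pl\"ucker Relation. The paper's own proof is in fact terser than yours --- it simply records $\ket{n_\alpha+1}=v_{\alpha,n_\alpha+1/2}\ket{n_\alpha}$ and writes out the three-term identity, without discussing the Koszul/Klein signs you worry about; your more careful bookkeeping of the $\varepsilon_{ij}$ is a refinement rather than a different route.
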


\begin{proof}
	Note that $\ket{n_{\alpha} + 1} = \psi_{\alpha, n_{\alpha} + \frac{1}{2}} \cdot \ket{n_{\alpha}} = v_{\alpha, n_{\alpha} + \frac{1}{2}} \ket{n_{\alpha}}$.
	
	Plücker relation:
	
	\begin{align*}
		\bra{g} v_{\alpha, n_{\alpha} + \frac{1}{2}} v_{\beta, n_{\beta} + \frac{1}{2}} \ket{\textbf{n}} \cdot \bra{g} v_{\gamma, n_{\gamma} + \frac{1}{2}} v_{\delta, n_{\delta} + \frac{1}{2}} \ket{\textbf{n}} &- \\
		\bra{g} v_{\alpha, n_{\alpha} + \frac{1}{2}} v_{\gamma, n_{\gamma} + \frac{1}{2}} \ket{\textbf{n}} \cdot \bra{g} v_{\beta, n_{\beta} + \frac{1}{2}} v_{\delta, n_{\delta} + \frac{1}{2}} \ket{\textbf{n}} &+ \\
		\bra{g} v_{\alpha, n_{\alpha} + \frac{1}{2}} v_{\delta, n_{\delta} + \frac{1}{2}} \ket{\textbf{n}} \cdot \bra{g} v_{\beta, n_{\beta} + \frac{1}{2}} v_{\gamma, n_{\gamma} + \frac{1}{2}} \ket{\textbf{n}} &= 0
	\end{align*}
	
\end{proof}

\begin{conjecture}
	The inverse is also true.
\end{conjecture}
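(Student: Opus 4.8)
The claim is the converse of the Theorem: every function $\tau$ on $\{\textbf{n}\in\mathbb{Z}^{s}:\operatorname{deg}(\textbf{n})=0\}$ which is not identically zero and which satisfies the octahedral relations $(\ref{s})$ should arise as $\tau(\textbf{n})=\bra{g}\ket{\textbf{n}}$ for some point $g$ of the Sato Grassmannian, i.e.\ some subspace $g\subset V=V_{1}\oplus\cdots\oplus V_{s}$ in the charge-zero sector. (For small $s$ the four-colour relation $(\ref{s})$ degenerates and one must add the lower Plücker relations of the earlier remarks.) Note that $g$ cannot be unique: the states $\ket{\textbf{n}}$ are all multi-ground states, so $\tau$ records only the ``ground-state'' Plücker coordinates of $g$ in each colour and leaves the others free; hence the claim is one of \emph{existence}, and it is presumably cleanest to establish it first under the extra assumption that $\tau$ has no zeros.

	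The plan is the propagation argument familiar from the octahedron recurrence. \emph{Step 1:} exhibit a subset $\mathcal{I}$ of the degree-zero lattice that is \emph{initial} for $(\ref{s})$, in the sense that by repeatedly solving the relations for one of their three terms every value $\tau(\textbf{n})$ becomes a rational expression in $\{\tau(\textbf{m}):\textbf{m}\in\mathcal{I}\}$, and this expression is unique. For $s=4$ one has $\{\operatorname{deg}=0\}\cong\mathbb{Z}^{3}$ and $(\ref{s})$ is exactly the classical octahedron recurrence, for which two adjacent slices are standard initial data; for general $s$ one chooses $\mathcal{I}$ analogously after fixing an identification $\{\operatorname{deg}=0\}\cong\mathbb{Z}^{s-1}$. \emph{Step 2:} realise that initial data by a point of the Grassmannian. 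Since the coordinates indexed by $\mathcal{I}$ carry no relations among themselves (that is what ``initial'' means), one can pick a decomposable $g$ in the big cell around a suitable reference state $\ket{\textbf{n}^{*}}$ (any one with $\tau(\textbf{n}^{*})\neq 0$) so that $\bra{g}\ket{\textbf{m}}=\tau(\textbf{m})$ for every $\textbf{m}\in\mathcal{I}$; concretely $g=\bigwedge_{(i,k)}\!\bigl(v_{i,k}+\sum_{(j,m)}A^{(i,k)}_{(j,m)}v_{j,m}\bigr)$, one factor for each level $(i,k)$ occupied in $\textbf{n}^{*}$, where the entries of the matrix $A$ are read off from the nearest-neighbour ratios $\tau(\textbf{n}^{*}+e^{i}-e^{j})/\tau(\textbf{n}^{*})$ and from their minors. \emph{Step 3:} conclude. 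Both $\tau$ and $\textbf{n}\mapsto\bra{g}\ket{\textbf{n}}$ satisfy $(\ref{s})$ --- the latter by the Theorem applied to the reconstructed $g$ --- and they agree on $\mathcal{I}$, so by the uniqueness in Step 1 they agree everywhere.

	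The main obstacle is Step 1 and the genericity it forces. To solve $(\ref{s})$ for one term one must divide by another value of $\tau$, so a nonvanishing hypothesis is unavoidable, and the general (possibly vanishing) case has to be recovered by a limiting argument inside $\Exterior^{\text{max}}V$, during which one must check that the limit of the $g$'s is still decomposable and still defines a genuine subspace --- the ``comparability'' condition for the semi-infinite wedge. A second, more computational difficulty is the consistency check in Step 2: one must verify that the nearest-neighbour ratios together with the $2\times 2, 3\times 3,\dots$ minors of $A$ are mutually compatible, i.e.\ that every such minor identity is an instance of $(\ref{s})$ for a suitable octahedron; here the signs arising from reordering semi-infinite wedges, and the choice of which octahedron to apply, are the parts that demand genuine care rather than routine bookkeeping. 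A cleaner route worth attempting is to avoid the matrix $A$ entirely and invoke the known dictionary between solutions of the Hirota/octahedron recurrence and points of Grassmannians from the discrete integrable systems literature, after matching the combinatorial data here --- the sublattice $\{\operatorname{deg}=0\}$ together with the octahedra supplied by $(\ref{s})$ --- with that in those references.
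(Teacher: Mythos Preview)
The statement you are addressing is labelled a \emph{Conjecture} in the paper, and the paper offers no proof for it; it is stated as an open problem immediately after the Theorem on the octahedral relation, with no further argument. There is therefore no ``paper's own proof'' to compare your proposal against.

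That said, your write-up is honest about its status (``Towards a proof''), and the strategy you outline --- choose initial data for the octahedron recurrence, realise those data by a point of the Sato Grassmannian, then propagate by uniqueness --- is the natural one and is indeed the standard shape of such converse arguments in the discrete integrable systems literature. You have also correctly identified the genuine obstacles: the division-by-$\tau$ issue that forces a genericity hypothesis, the limiting argument needed to handle zeros, and the consistency check that every minor identity required in Step~2 is actually a consequence of the octahedral relations. None of these are resolved in your sketch, and the last one in particular (that the higher minors of your matrix $A$ are forced by $(\ref{s})$ alone) is where the real work lies; for general $s$ the collection of ground-state Pl\"ucker coordinates $\bra{g}\ket{\textbf{n}}$ is a very thin slice of all Pl\"ucker coordinates of $g$, and it is not obvious that the quadratic relations $(\ref{s})$ among them suffice to cut out exactly the image of the Grassmannian. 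So as a proof the proposal is incomplete, but as a programme it is reasonable --- and, to repeat, it goes strictly beyond what the paper claims to establish.
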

Let $\sigma \in S_s$be a permutation. Denote by $\sigma(\textbf{n}):=(\sigma(n_1),...,\sigma(n_s))$ the action of
the permutation on $\mathbb{Z}^s$. Define the quadratic form \( q_{\sigma}(\mathbf{n}) \) as follows: $$q_{\sigma}(\textbf{n}):=\sum_{\alpha<\beta, \sigma(\alpha)>\sigma(\beta)}i_{\alpha} i_{\beta}$$
\begin{proposition}
	The permutation group acts on the set of solution octahedral relation $\ref{s}$ by 
	
	$$\tau(\textbf{n})\longrightarrow (-1)^{q_{\sigma}(\textbf{n})}\tau(\sigma(\textbf{n})$$
	
\end{proposition}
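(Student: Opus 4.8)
The plan is to reduce the whole statement to the skew-symmetry of the Plücker quadratic relation, the prefactor $(-1)^{q_{\sigma}(\mathbf{n})}$ being exactly what is needed to reorder the four indices back into increasing order. Write $\tau^{\sigma}(\mathbf{n}):=(-1)^{q_{\sigma}(\mathbf{n})}\tau(\sigma(\mathbf{n}))$, where in $q_{\sigma}$ one reads $i_{\alpha}=n_{\alpha}$ and $\sigma$ permutes the coordinates, so that $\deg$ is preserved and $\sigma(\mathbf{n}+e^{\alpha})=\sigma(\mathbf{n})+e^{\sigma(\alpha)}$. Two things must be shown: (A) if $\tau$ solves \eqref{s} then so does $\tau^{\sigma}$; and (B) $(\tau^{\rho})^{\sigma}=\tau^{\rho\sigma}$, which together with $q_{\mathrm{id}}\equiv 0$ makes $\sigma\mapsto(\tau\mapsto\tau^{\sigma})$ a genuine action of $S_{s}$ on the solution set.

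For (A), first note that $q_{\sigma}$ is a quadratic form, $q_{\sigma}(\mathbf{n})=\sum_{\alpha<\beta}\varepsilon_{\sigma}(\{\alpha,\beta\})\,n_{\alpha}n_{\beta}$, where $\varepsilon_{\sigma}(\{\alpha,\beta\})\in\{0,1\}$ is the indicator that $\sigma$ reverses the order of $\alpha$ and $\beta$. Polarising,
\[
q_{\sigma}(\mathbf{n}+e^{\alpha}+e^{\beta})=q_{\sigma}(\mathbf{n})+\varepsilon_{\sigma}(\{\alpha,\beta\})+\ell^{\alpha}_{\sigma}(\mathbf{n})+\ell^{\beta}_{\sigma}(\mathbf{n}),\qquad \ell^{\alpha}_{\sigma}(\mathbf{n}):=\sum_{c\ne\alpha}\varepsilon_{\sigma}(\{c,\alpha\})\,n_{c},
\]
where $\ell^{\alpha}_{\sigma}$ is linear in $\mathbf{n}$ and, crucially, independent of $\beta$. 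Substituting this into \eqref{s} written for $\tau^{\sigma}$ at a base point $\mathbf{n}$ with $\deg\mathbf{n}=-2$: in each of the three products the two copies of $(-1)^{q_{\sigma}(\mathbf{n})}$ multiply to $1$, and the linear pieces collect, in every one of the three terms, into the \emph{same} scalar $(-1)^{\ell^{\alpha}_{\sigma}(\mathbf{n})+\ell^{\beta}_{\sigma}(\mathbf{n})+\ell^{\gamma}_{\sigma}(\mathbf{n})+\ell^{\delta}_{\sigma}(\mathbf{n})}$. Dividing out this common nonzero factor, ``$\tau^{\sigma}$ satisfies \eqref{s}'' becomes the statement that the alternating three-term sum of the products $\tau(\mathbf{m}+e^{\sigma\alpha}+e^{\sigma\beta})\,\tau(\mathbf{m}+e^{\sigma\gamma}+e^{\sigma\delta})$ and its two other pairings, with $\mathbf{m}:=\sigma(\mathbf{n})$ and each product weighted by $(-1)^{\varepsilon_{\sigma}(\{\alpha,\beta\})+\varepsilon_{\sigma}(\{\gamma,\delta\})}$ (respectively for the other pairings), vanishes.

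To finish (A) I would recognise this signed sum as a Plücker relation with unsorted indices. On the base point $\mathbf{m}$ set, for distinct $p,q\in\{1,\dots,s\}$, $\langle p\,|\,q\rangle:=\operatorname{sgn}(q-p)\,\tau(\mathbf{m}+e^{p}+e^{q})=-\langle q\,|\,p\rangle$, and $f(p_{1},p_{2},p_{3},p_{4}):=\langle p_{1}|p_{2}\rangle\langle p_{3}|p_{4}\rangle-\langle p_{1}|p_{3}\rangle\langle p_{2}|p_{4}\rangle+\langle p_{1}|p_{4}\rangle\langle p_{2}|p_{3}\rangle$. The very manipulation used to show that the expression $R$ in the Plücker relation is skew-symmetric shows that $f$ is totally skew-symmetric in $(p_{1},p_{2},p_{3},p_{4})$. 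For $p_{1}<p_{2}<p_{3}<p_{4}$ all the signs $\operatorname{sgn}$ equal $+1$, so $f(p_{1},p_{2},p_{3},p_{4})$ is literally the left-hand side of \eqref{s} at $\mathbf{m}$ and hence vanishes by hypothesis; by skew-symmetry $f$ then vanishes on every quadruple of distinct indices, in any order. Since $\alpha<\beta$ forces $\operatorname{sgn}(\sigma\beta-\sigma\alpha)=(-1)^{\varepsilon_{\sigma}(\{\alpha,\beta\})}$ (and likewise for the other five pairs), the signed sum above is exactly $f(\sigma\alpha,\sigma\beta,\sigma\gamma,\sigma\delta)$, which is $0$; this proves (A). For (B), expanding $(\tau^{\rho})^{\sigma}(\mathbf{n})=(-1)^{q_{\sigma}(\mathbf{n})+q_{\rho}(\sigma(\mathbf{n}))}\tau\big(\rho(\sigma(\mathbf{n}))\big)$ and using $\rho(\sigma(\mathbf{n}))=(\rho\sigma)(\mathbf{n})$, it suffices to verify the cocycle identity $q_{\rho\sigma}(\mathbf{n})\equiv q_{\sigma}(\mathbf{n})+q_{\rho}(\sigma(\mathbf{n}))\pmod 2$, which I would check coefficient by coefficient in the monomials $n_{\alpha}n_{\beta}$: the coefficient of $n_{\alpha}n_{\beta}$ in $q_{\pi}(\mathbf{n})$ is $\varepsilon_{\pi}(\{\alpha,\beta\})$, a reindexing shows the coefficient in $q_{\rho}(\sigma(\mathbf{n}))$ is $\varepsilon_{\rho}(\{\sigma\alpha,\sigma\beta\})$, and ``$\rho\sigma$ reverses $\{\alpha,\beta\}$'' holds iff an odd number of ``$\sigma$ reverses $\{\alpha,\beta\}$'' and ``$\rho$ reverses $\{\sigma\alpha,\sigma\beta\}$'' holds — i.e.\ the indicators add mod $2$.

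The conceptual content is short: the octahedral relation \emph{is} the skew-symmetric Plücker quadratic in disguise, so the real work is purely the sign bookkeeping — checking in (A) that the $\mathbf{n}$-linear part of $q_{\sigma}$ factors out uniformly across the three terms (so that only the inversion signs $\varepsilon_{\sigma}$ remain), matching those surviving signs with the $\operatorname{sgn}(q-p)$'s, and establishing the weighted cocycle identity in (B). I also expect to need a line fixing the paper's conventions, reading $i_{\alpha}=n_{\alpha}$ in $q_{\sigma}$ and interpreting $\sigma(\mathbf{n})$ as the coordinate permutation $(\sigma(\mathbf{n}))_{i}=n_{\sigma^{-1}(i)}$; with these in place nothing deeper than the antisymmetry already recorded in the proof of the Plücker relation is required.
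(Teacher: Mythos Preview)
Your argument is correct, and it goes well beyond the paper's own proof, which consists of the single sentence ``It is sufficient to verify that any transposition transforms a solution of \eqref{s} to a solution of \eqref{s}.'' The paper's route is to reduce to generators and (implicitly) do a case check on where the two swapped indices sit relative to $\alpha<\beta<\gamma<\delta$; you instead treat an arbitrary $\sigma$ in one shot by recognising the octahedral relation as the totally skew Pl\"ucker quadratic $f(p_1,p_2,p_3,p_4)$ and showing that the $\mathbf n$-dependent part of $q_\sigma$ factors out uniformly while the residual $\varepsilon_\sigma(\{\,\cdot\,,\cdot\,\})$ signs are exactly the $\operatorname{sgn}(q-p)$ needed to reorder $(\sigma\alpha,\sigma\beta,\sigma\gamma,\sigma\delta)$. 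That is cleaner conceptually and avoids any casework. You also prove something the paper's sketch leaves implicit: the cocycle identity $q_{\rho\sigma}(\mathbf n)\equiv q_\sigma(\mathbf n)+q_\rho(\sigma(\mathbf n))\pmod 2$, which is what guarantees $\sigma\mapsto(\tau\mapsto\tau^\sigma)$ is an honest $S_s$-action rather than merely that each $\sigma$ preserves the solution set. The only caveat is the one you already flag about conventions --- reading $i_\alpha=n_\alpha$ in $q_\sigma$ and fixing $(\sigma(\mathbf n))_i=n_{\sigma^{-1}(i)}$ --- and with those made explicit everything checks.
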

\begin{proof}
	It is sufficient to verify that any transposition transforms a solution of $\ref{s}$ to solution of $\ref{s}$.
\end{proof}
\begin{definition}
	A function $\tau$ is said to be of double period if there exists a subgroup $G$ of $A_{s-1} = \{\textbf{n} \in \mathbb{Z}^s \mid \operatorname{deg}(\textbf{n}) = 0\}$ of rank 2 such that $\tau$ is invariant under this subgroup. In particular, $\tau$ can be viewed as a function on the quotient $A_{s-1}/G$.
\end{definition}

\begin{remarque}
	By proposition and the definiton of double period of $\tau$ function taht a subgroup can be encoded by convexes polygons with integer vertices.
\end{remarque}

The double-period function $\tau$ is then defined on the quotient $A_{s-1}/\langle \textbf{a}, \textbf{b} \rangle$. In the following examples, we will explore some constructions of integer sequences from the function $\tau$ by taking $s = 4$.

\begin{example}
	The following polygon:
	\begin{center}
		\begin{tikzpicture}
			\draw[step=.5cm,gray,very thin] (-1.5,-1.5) grid (1.5,1.5);
			\draw[very thick,->] (-1.5,-0.5) -- (1,0);
			\draw[very thick,<-] (-1.5,-0.5) -- (-1,0);
			\draw[very thick,<-] (-1,0) -- (0,0.5);
			\draw[very thick,<-] (0,0.5) -- (1,0);
			\draw (-0.5,0) node {\huge \textcolor{red}{$\cdot$}};
			\draw (0,0) node {\huge \textcolor{red}{$\cdot$}};
			\draw (0.5,0) node {\huge \textcolor{red}{$\cdot$}};
		\end{tikzpicture}
	\end{center}
	with matrix: $\begin{pmatrix}
		5 & -2 & -2 & -1 \\
		1 & 1 & -1 & -1
	\end{pmatrix} = \begin{pmatrix}
		\textbf{a} \\
		\textbf{b}
	\end{pmatrix}$. By performing row operations, the matrix becomes $\begin{pmatrix}
		3 & -4 & 0 & 1 \\
		-4 & 3 & 1 & 0
	\end{pmatrix}$, so the quotient $A_{3}/\langle \textbf{a}, \textbf{b} \rangle$ is isomorphic to the group $\{(l, -l, 0, 0) \mid l \in \mathbb{Z}\}$. Taking $\textbf{n} = (l-1, -l-1, 0, 0)$, the octahedral relation becomes:
	
	\begin{align*}
		\tau(l, -l, 0, 0)\tau(l-1, -l-1, 1, 1) - \\
		\tau(l, -l-1, 1, 0)\tau(l-1, -l, 0, 1) + \\
		\tau(l, -l-1, 0, 1)\tau(l-1, -l, 1, 0) &= 0
	\end{align*}
	
	By the $(\textbf{a}, \textbf{b})$-periodicity of the function $\tau$, we deduce:
	
	\begin{align*}
		\tau(l, -l, 0, 0)\tau(-l, -l, 0, 0) - \\
		\tau(l+4, -l-4, 0, 0)\tau(l-4, -l+4, 0, 0) + \\
		\tau(l-3, -l+3, 0, 0)\tau(l+3, -l-3, 0, 0) &= 0
	\end{align*}
	
	Letting $a_l = \tau(l, -l, 0, 0)$, the previous octahedral relation becomes:
	
	$$a_l^2 - a_{l+4}a_{l-4} + a_{l-3}a_{l+3} = 0$$
	
	$$1, 1, 1, 1, 1, 1, 1, 1, 2, 3, 4, 5, 9, 18, 34, 93, 180, 348, 724, 3033, 9666, 24986, 83761, 261033, \ldots$$
	
	This is sequence A018896 in OEIS, discovered by Somos \cite{11}.
	\end{example}
	\begin{example}
		.\\
		\begin{center}
			\begin{tikzpicture}
				\draw[step=.5cm,gray,very thin] (-1.5,-1.5) grid (1.5,1.5);
				\draw[very thick,->] (-1,-0.5) -- (-0.5,-0.5);
				\draw[very thick,->] (-0.5,-0.5) -- (1,0);
				\draw[very thick,->] (1,0) -- (-0.5,1);
				\draw[very thick,->] (-0.5,1) -- (-1,-0.5);
				\draw (-0.5,0) node {\huge \textcolor{red}{$\cdot$}};
				\draw (0,0) node {\huge \textcolor{red}{$\cdot$}};
				\draw (0.5,0) node {\huge \textcolor{red}{$\cdot$}};
				\draw (0,0.5) node {\huge \textcolor{red}{$\cdot$}};
				\draw (-0.5,0.5) node {\huge \textcolor{red}{$\cdot$}};
			\end{tikzpicture}
		\end{center}
		The corresponding matrix is $\begin{pmatrix}
			1 & 3 & -3 & -1\\
			0 & 1 & 2 & -3
		\end{pmatrix}$. After performing elementary row operations, the matrix becomes $\begin{pmatrix}
			1 & 0 & -9 & 8\\
			0 & 1 & 2 & -3
		\end{pmatrix}$. Thus, the quotient $A_{3}/\langle \textbf{a}, \textbf{b} \rangle$ is isomorphic to the group $\{(0, 0, l, -l) \mid l \in \mathbb{Z}\}$. Taking $\textbf{n} = (-1, -1, l, -l)$, the octahedral relation becomes:
		
		\begin{align*}
			\tau(0, 0, l, -l) \tau(-1, -1, l+1, -l+1) - \\
			\tau(0, -1, l+1, -l) \tau(-1, 0, l, -l+1) + \\
			\tau(0, -1, l, -l+1) \tau(-1, 0, l+1, -l) &= 0
		\end{align*}
		
		By $(\textbf{a}, \textbf{b})$-periodicity of the function $\tau$, we deduce:
		
		\begin{align*}
			\tau(0, 0, l, -l) \tau(0, 0, l-6, -l+6) - \\
			\tau(0, 0, l+3, -l-3) \tau(0, 0, l-9, -l+9) + \\
			\tau(0, 0, l+2, -l-2) \tau(0, 0, l-8, -l+8) &= 0
		\end{align*}
		
		Let $a_l = \tau(0, 0, l, -l)$. The previous recurrence relation becomes:
		
		$$a_l a_{l-6} - a_{l+3} a_{l-9} + a_{l+2} a_{l-8} = 0$$
		
		$$1, 1, 1, 1, 1, 1, 1, 1, 1, 1, 1, 1, 2, 3, 4, 6, 9, 13, 19, 28, 41, 79, 163, 490, 972, 1785, 4270, 9483, \ldots$$
		
		This sequence is not yet listed in OEIS.
	\end{example}

    \begin{example} For $s=5$
	 	.\\
	 	\begin{center}
	 		\begin{tikzpicture}
	 			\draw[step=.5cm,gray,very thin] (-1.5,-1.5) grid (1.5,1.5);
	 			\draw[very thick,->] (0,-0.5) -- (0.5,-0.5);
	 			\draw[very thick,->] (0.5,-0.5) -- (0.5,0);
	 			\draw[very thick,->] (0.5,0) -- (0,0.5);
	 			\draw[very thick,->] (0,0.5) -- (-1,1);
	 			\draw[very thick,->] (-1,1) -- (0,-0.5);
	 			\draw (-0.5,0.5) node {\huge \textcolor{red}{$\cdot$}};
	 			\draw (0,0) node {\huge \textcolor{red}{$\cdot$}};

	 		\end{tikzpicture}
	 	\end{center}
	 	The corresponding matrix is  
\[
\begin{pmatrix}  
1 & 0 & -1 & -2 & 2 \\  
0 & 1 & 1 & 1 & -3  
\end{pmatrix}  
\]
The quotient \( A_4 / \langle \textbf{a}, \textbf{b} \rangle \) is isomorphic to  
\[
\{(0,0, l, s, -l-s) \mid l,s \in \mathbb{Z} \}
\]  
and by taking \( \textbf{n} = (-1,-1,l,s,-l-s) \), we obtain 5 octahedral relations:

	 	 \begin{align*}
	 	 	\tau(0,0,l,s,-l-s)\tau(-1,-1,l+1,s+1,-l-s)-&\\-\tau(0,-1,l+1,s,-l-s)\tau(-1,0,l,s+1,-l-s)+&\\+\tau(0,-1,l,s+1,-l-s)\tau(-1,0,l+1,s,-l-s)&=0
	 	 \end{align*}
	 	 \begin{align*}
	 	 	\tau(0,0,l,s,-l-s)\tau(-1,-1,l+1,s,-l-s+1)-&\\-\tau(0,-1,l+1,s,-l-s)\tau(-1,0,l,s,-l-s+1)+&\\+\tau(0,-1,l,s,-l-s+1)\tau(-1,0,l+1,s,-l-s)&=0
	 	 \end{align*}
	 	 \begin{align*}
	 	 	\tau(0,0,l,s,-l-s)\tau(-1,-1,l,s+1,-l-s+1)-&\\-\tau(0,-1,l,s+1,-l-s)\tau(-1,0,l,s,-l-s+1)+&\\+\tau(0,-1,l,s,-l-s+1)\tau(-1,0,l,s+1,-l-s)&=0
	 	 \end{align*}
	 	 \begin{align*}
	 	 	\tau(0,-1,l+1,s,-l-s)\tau(-1,-1,l,s+1,-l-s+1)-&\\-\tau(0,-1,l,s+1,-l-s)\tau(-1,-1,l+1,s,-l-s+1)+&\\+\tau(0,-1,l,s,-l-s+1)\tau(-1,-1,l+1,s+1,-l-s)&=0
	 	 \end{align*}
	 	 \begin{align*}
	 	 	\tau(-1,0,l+1,s,-l-s)\tau(-1,-1,l,s+1,-l-s+1)-&\\-\tau(-1,0,l,s+1,-l-s)\tau(-1,-1,l+1,s,-l-s+1)+&\\+\tau(-1,0,l,s,-l-s+1)\tau(-1,-1,l+1,s+1,-l-s)&=0
	 	 \end{align*}
	 	 By the (\(\textbf{a},\textbf{b}\))-periodicity of the function \( \tau \) and by setting \( a_{l,s} = \tau(0,0,l,s,-l-s) \), we derive the following recurrence relations:

	 	 \begin{align*}
	 	 	a_{l,s}a_{l+1,s}-a_{l+2,s+1}a_{l-1,s-1}+a_{l+1,s+2}a_{l,s-2}&=0\\
	 	 	a_{l,s}a_{l+1,s-1}-a_{l+2,s+1}a_{l-1,s-2}+a_{l+1,s+1}a_{l,s-2}&=0\\
	 	 	a_{l,s}^2-a_{l+1,s+2}a_{l-1,s-2}+a_{l+1,s+1}a_{l-1,s-1}&=0\\
	 	 	a_{l+2,s+1}a_{l,s}-a_{l+1,s+2}a_{l+1,s}+a_{l+1,s+1}a_{l+1,s}&=0\\
	 	 	a_{l,s-2}a_{l,s}-a_{l-1,s-1}a_{l+1,s-1}+a_{l-1,s-2}a_{l+1,s}&=0
	 	 \end{align*}
	 \end{example}
	 
\begin{remarque}
	To prove that the sequence derived from the octahedral relation is indeed an integer sequence, you can leverage the theory of cluster algebras, particularly using the theorem of Fomin and Zelevinsky.
 \end{remarque}

 \section*{Acknowledgments}
 I would like to express my deep gratitude to Mr. Vladimir Fock, Professor at the University of Strasbourg France.

\end{document}